\documentclass[12pt]{amsart}
\usepackage{amsmath, amsthm, amscd, amsfonts, amssymb, graphicx, color}
\usepackage{hyperref}
\usepackage{array}
\usepackage{graphicx}
\usepackage[capitalize,nameinlink]{cleveref}
\textheight 22.5truecm \textwidth 14.5truecm
\setlength{\oddsidemargin}{0.35in}\setlength{\evensidemargin}{0.35in}
\usepackage{amsmath, amsthm, amscd, amsfonts, amssymb, graphicx, color}
\setlength{\topmargin}{-.5cm}
\newtheorem{thm}{Theorem}
\newtheorem{rmk}{Remark}
\newtheorem{theorem}{Theorem}[section]
\newtheorem{lemma}[theorem]{Lemma}
\newtheorem{proposition}[theorem]{Proposition}
\newtheorem{corollary}[theorem]{Corollary}
\theoremstyle{definition}
\newtheorem{definition}[theorem]{Definition}
\newtheorem{example}[theorem]{Example}

\newtheorem{remark}[theorem]{Remark}
\numberwithin{equation}{section}
\showoutput
\showboxdepth2
\begin{document}
\setcounter{page}{1}

\title[Rigidity of conformal submersions and quasi-Einstein manifolds]{Rigidity of conformal submersions and quasi-Einstein manifolds}

\author[Atreyee Bhattacharya and Sayoojya Prakash]{Atreyee Bhattacharya and Sayoojya Prakash}

\address{Department of Mathematics, Indian Institute of Science Education and Research Bhopal}
\email{atreyee@math.iiserb.ac.in}
\address{Department of Mathematics, Indian Institute of Science Education and Research Bhopal}
\email{sayoojya21@iiserb.ac.in}
\keywords{conformal submersions, quasi-Einstein, rigidity}
\subjclass[2020]{53C24, 53C15, 53C21, 53C25}
\begin{abstract}
In this paper, we study two notions of rigidity, one of conformal submersions and the other of quasi Einstein manifolds, with an attempt to relate the two notions. Note that a smooth submersion between Riemannian manifolds is called \textit{conformal} if it restricts to a conformal isometry on the horizontal distribution. A conformal submersion is said to be \textit{rigid} if it reduces to a Riemannian submersion up to homothety. On the other hand, quasi-Einstein manifolds are generalizations of Einstein manifolds that are of interest both in Riemannian geometry and theoretical physics. A Riemannian manifold $(M,g)$ is called quasi-Einstein if its Ricci tensor satisfies the identity: $Ric_{g} + Hess(f) - \frac{1}{m}df\otimes df = \lambda g$ for some $f \in C^{\infty}(M)$ and constants $\lambda \in \mathbb{R}$ and $0 < m \leq \infty$. A quasi-Einstein manifold is said to be \textit{rigid} if it reduces to an Einstein manifold. In this paper, we employ certain techniques involving conformal submersions to establish rigidity results for a class of closed quasi-Einstein manifolds with $\lambda > 0$.  In particular, we study curvature conditions that force conformal submersions to be rigid, also leading to the rigidity of a related class of quasi-Einstein manifolds.
\end{abstract} 
\maketitle
\section{Introduction}\label{S1}  
In Riemannian geometry, understanding the \textit{rigidity} of geometric properties (e.g. curvature properties, existence of special maps, etc.) has been a standard technique in identifying when a given manifold admits a special Riemannian metric. In this paper, we adhere to the following interpretation of rigidity: A property $\mathcal{P}$ of a Riemannian manifold $(M,g)$ is \textit{rigid} if it has one of the following features:
\begin{enumerate}
\item The existence of $\mathcal{P}$ imposes certain constraints on the underlying topology of $M$ i.e., $(M,g)$ fails to possess $\mathcal{P}$ unless it has the desired underlying structure.
\item $\mathcal{P}$ generalizes another property $\mathcal{P}'$ and under suitable conditions, the existence of both $\mathcal{P}$ and $\mathcal{P}'$ are equivalent.  
\end{enumerate}
The above two notions will be referred to as Type (1) and Type (2) rigidities respectively. Clearly, Type (1) rigidity is a stronger notion 
(that includes constant sectional curvature, Ricci curvature bounded below by a positive constant, etc.) as it restricts the underlying 
topology of a manifold. However, in this paper we address two different Type (2) rigidities: the rigidity of quasi-Einstein manifolds and 
the rigidity of conformal submersions, as elaborated below and try to analyze when one of them implies the other.

Based on curvature properties, complete Riemannian manifolds are broadly classified into three categories: the space forms (manifolds with constant sectional curvature), locally symmetric spaces (manifolds with parallel Riemannian curvature), and Einstein manifolds (manifolds with constant Ricci curvature). While the topology of locally symmetric spaces (that includes space forms) is well known (\cite{ECSI, ECSII}), the conditions under which a given manifold admits an Einstein metric are not fully understood. In order to create non-trivial examples of Einstein metrics, one appeals to Riemannian submersion techniques, which include warped product metrics (\cite{BEM, BBWP}). It turns out that warped product Einstein metrics are in one-to-one correspondence with a special class of Riemannian manifolds (see \cite{CEWP}), known as quasi-Einstein manifolds, that generalizes Einstein manifolds. More precisely, a Riemannian manifold $(M,g)$ is said to be quasi-Einstein if its Ricci curvature $Ric_g$ satisfies the identity:  $\ Ric_g-\lambda g=   \frac{1}{m} df \otimes df -Hess_{g}(f),$ for some $f\in C^{\infty}(M)$, and constants $\lambda \in \mathbb{R}$ and $0 < m \leq \infty$. When $m = \infty$, it is called a Ricci soliton, and when $m \in \mathbb{N}$, it is in one-to-one correspondence with warped product Einstein metrics. A quasi-Einstein metric is said to be \textit{rigid} if it reduces to an Einstein metric. 

 Rigidity of quasi-Einstein manifolds has been of interest both in Riemannian geometry and theoretical physics. In dimension two, R.~Hamilton proved that compact Ricci solitons are rigid (\cite{RHS}). In dimension three, T.~Ivey showed that compact Ricci solitons are rigid, and he extended this result by proving that all compact Ricci solitons with $\lambda \leq 0$ are rigid in all dimensions (\cite{RSTV}). Much later, J.~Case, Y.~Shu, and G.~Wei proved that two-dimensional compact quasi-Einstein manifolds with finite $m$ are rigid. More generally, they established that all compact K\"ahler quasi-Einstein manifolds with finite $m$ are rigid in all dimensions (\cite{RQE}). Moreover, D.S.~ Kim and Y.H.~ Dim showed that compact quasi-Einstein manifolds with finite $m$ and $\lambda \leq 0$ are rigid (\cite{CEWP}). In a series of papers, P.~Petersen and W.~Wylie presented several rigidity results. They proved that a compact \textit{shrinking} gradient Ricci soliton is rigid if and only if $\int_{M}Ric(\nabla f, \nabla f) dV_{g} \leq 0$ (\cite{RGRSPW}). They also showed that homogeneous gradient Ricci solitons are rigid (\cite{GRSPW}). Furthermore, they established that simply connected shrinking gradient Ricci solitons with vanishing Weyl tensor and Ricci tensor satisfying a weak integral condition are isometric to $\mathbb{S}^{n}$, $\mathbb{S}^{n-1} \times \mathbb{R}$, and $\mathbb{R}^{n}$ (\cite{CGSPW}). Moreover, A.~Naber proved that four-dimensional noncompact shrinking Ricci solitons with a bounded non-negative curvature operator are isometric to $\mathbb{R}^{4}$ or finite quotients of  $\mathbb{S}^{2} \times \mathbb{R}^{2}$ or $\mathbb{S}^{3} \times \mathbb{R}$ (\cite{RSAN}). Additionally, G.~Catino, C.~Mantegazza, L.~Mazzieri, and M.~Rimoldi proved that complete locally conformally flat quasi-Einstein manifolds of dimension $n \ge 3$ are locally a warped product with $(n - 1)$-dimensional fibers of constant curvature (\cite{LCFQEM}).
 
In this paper, we employ techniques from conformal submersions to study the rigidity of certain quasi-Einstein metrics: conformal submersions are generalizations of Riemannian submersions where the differential of the submersion map is a conformal isometry when restricted to its horizontal distribution. Since Riemannian submersions have been instrumental in constructing new Einstein metrics, one would naturally expect conformal submersion techniques to play a similar role for quasi-Einstein metrics. A conformal submersion is said to be \textit{rigid} if it reduces to a Riemannian submersion up to homothety. Recently, the rigidity of conformal submersions was established for a special class of \textit{contact metric manifolds} (\cite{RSAD}) using standard contact Riemannian geometry techniques. However, in general, not much is known about the rigidity of conformal submersions that the authors are aware of. We connect the notions of rigidity of conformal submersions and rigidity of quasi-Einstein metrics as follows.  We begin by studying curvature conditions that impose rigidity on conformal submersions where the total space, fibers, and base are all closed, orientable Riemannian manifolds, and the total space is of dimension at least four. From these results, we obtain a rigidity criterion for certain quasi-Einstein manifolds. Recall that a Riemannian submersion is described by its fundamental tensors $T$ and $A$, respectively measuring the obstruction to the fibers being totally geodesic and the integrability of the horizontal distribution (see \cite{BEM} for details). Analogously, for a conformal submersion with conformal factor $e^{2f}$, one can define fundamental tensors, denoted by $T^{f}$ and $A^{f}$, which depend on $f$. While $A^{f}$ represents the obstruction to the integrability of the horizontal distribution of the conformal submersion, in this case, the vanishing of $T^{f}$ implies that the fibers are totally umbilical (see \cref{s22} for details). We prove the following result.

\begin{thm}\label{T11}
 Let $\pi_{f} \colon (M,g) \rightarrow (B,g_{B})$ be a conformal submersion with fiber $(F,g_F)$ such that $(M,g)$ is locally conformally flat and  $dim(B) = dim(F)$. If the scalar curvatures of the total space and fibers have particular signs (not necessarily the same sign) and $(F,g_{F},h,m,\lambda)$ is quasi-Einstein, then $\pi_{f}$ is rigid if either of the following conditions holds.
\begin{enumerate}
\item $T^{f} = 0$, the scalar curvature of the base is non-positive, and $\lambda \leq 0$.
\item The horizontal distribution is integrable, $T^{f}$ is parallel with respect to $g$, the scalar curvature of the base is non-negative, and $\lambda \geq 0$.
\end{enumerate}
\end{thm}

\begin{rmk}\label[remark]{R12}
\begin{enumerate}
    \item Our analysis mostly uses classical techniques from Riemannian geometry, such as the weak maximal principle and the divergence theorem. 
    \item We also provide criteria for rigidity of conformal submersions similar to \cref{T11} by assuming either the total space or the base to be \textit{quasi-Einstein} (see Theorems \ref{T35} and \ref{T38}). 
    \item The condition $dim(F) = dim(B)$ in \cref{T11} is necessary as shown in \cref{E310}.
\end{enumerate}
\end{rmk}
The next result deals with the rigidity of conformal submersions where the total space is the product of closed Einstein manifolds.
\begin{thm}\label{T13}
Let $\pi_{f} \colon (M,g) \rightarrow (B,g_{B})$ be a conformal submersion with fiber $F$, where the total space $(M,g)$ 
is the product of closed Einstein manifolds. 
Then $\pi_{f}$ is rigid if it satisfies one of the following three conditions.
\begin{enumerate}
\item The horizontal distribution is integrable, $tr(T^{f})$ (with respect to $g$) is parallel with respect to $g$, together with  fibers having constant scalar curvature (with the same constant) and $dim(B) \geq 2$,
\item  The trace $tr(T^{f})$ (with respect to $g$) vanishes together with either of the following conditions: 
\begin{enumerate}
\item The fibers have constant scalar curvature (with the same constant), $\dim (B) \ge 2$, and  $|A^{f}|_g^{2}$ is constant,
\item $(B,g_{B})$ is scalar-flat and $ 2|A^{f}|_g^{2} + |T^{f}|_g^{2}$ is constant,
\end{enumerate}
\item The fibers are totally geodesic with respect to $g$, $(B,g_{B})$ is scalar-flat with $\dim (B) \ge 2$, and $|A^{f}|_g^{2}$ is constant.   
\end{enumerate}
\end{thm}
As an immediate consequence, we arrive at a conclusion similar to Corollary 9.64 of \cite{BEM}.
\begin{rmk}
    If $\pi_{f} : (M,g) \rightarrow (B,g_{B})$ is a principal G-bundle over a scalar flat base with totally geodesic fibers with respect to $g$, and if  $\pi_{f}$ is a non-trivial conformal submersion with constant $|A^{f}|^{2}$, then $M$ cannot admit an Einstein metric.
\end{rmk}
\begin{rmk}
    Note that, \cref{T13}(3) is similar to Corollary 9.62 of \cite{BEM}.
\end{rmk}
In this paper, we obtain rigidity criteria for closed quasi-Einstein manifolds by realizing it as the base or fiber of a suitable conformal submersion and then using the aforementioned rigidity criteria for conformal submersions. We consider quasi-Einstein manifolds of dimensions at least $3$ and prove the following result.
\begin{thm}\label{T14}
A closed orientable quasi-Einstein manifold $(M,g,h,m,\lambda)$ is rigid if it satisfies one of the following criteria. 
\begin{enumerate}
\item $m \ge 1$, $\lambda \ge 0$, and $(M,g)$ is the fiber (respectively, the base) of a conformal submersion with 
integrable horizontal distribution, parallel $T^{f}$ (with respect to $g$), locally conformally flat total space having twice the dimension of $M$, and the base (respectively, the fiber) has non-negative scalar curvature. 
    \item $(M,g)$ is the base of a conformal submersion with integrable horizontal distribution, parallel $tr(T^{f})$ (with respect to $g$) and $|T^{f}|_g^{2}$ is constant, where the total space is the product of Einstein manifolds, and fibers have constant scalar curvature (with the same constant).  
    \item  $(M,g)$ is the base of a conformal submersion with vanishing $tr(T^{f})$ and constant $|T^{f}|_g^{2}$ and $|A^{f}|_g^{2}$, where the total space is the product of Einstein manifolds, and fibers have constant scalar curvature (with the same constant). 
\end{enumerate}    
\end{thm}
\begin{rmk}\label{R15}
We obtain rigidity results for a quasi-Einstein manifold analogous to \cref{T14} (2),(3) by realizing it as the fiber of a conformal submersion (see \cref{T56} and \cref{T57}). Additionally, we establish rigidity results for a quasi-Einstein manifold similar to \cref{T14} (1) by realizing it as the total space of a conformal submersion (see \cref{T52}) and conclude the following result (see \cref{E53} for details).  
\end{rmk}
\begin{thm}
    Let $(M,g)$ be a closed locally conformally flat manifold which is a Riemannian product of manifolds of equal dimension. If $(M,g,f,m,\lambda)$ is quasi-Einstein with $m \geq 1$ up to conformal equivalence, then $(M,g)$ is rigid. In particular, there does not exist any non-trivial even-dimensional closed locally conformally quasi-Einstein manifold with $m \geq 1$ that splits into a Riemannian product of two manifolds of equal dimension up to conformal equivalence.
\end{thm}
\begin{rmk} \label[remark]{R17}
Using \cref{T13}, we partially recover Theorem 1* of \cite{CDKR} as stated below:
Let $(M,g)$ be a closed Einstein manifold of dimension at least $4$, which is the total space of a conformal submersion 
$\pi\colon (M,g) \rightarrow (B,g_{B})$ with integrable horizontal distribution and vanishing $T^{f}$. Then up to homothety, 
$g$ is the unique Einstein metric in its conformal class whenever the fibers are one-dimensional or
the base is scalar-flat.   
\end{rmk}
\tableofcontents
\section{Preliminaries}\label{S2}
\noindent In this section, we recollect some basic concepts from Riemannian geometry that are relevant to this paper (see \cite{BEM, RGGHL, RGPP} for details) and set up the notation to be followed. Let $(M,g)$ denote a Riemannian manifold and $\nabla$ the associated Levi-Civita connection. We will restrict to closed (compact without boundary), connected, and orientable Riemannian manifolds. 
\begin{definition}\label{D21}   
Given a Riemannian manifold $(M,g)$, its \textbf{Riemann curvature} $R$ is the $(0,4)$ tensor defined by
$$R(X,Y,Z,W) = g(\nabla_{Y}\nabla_{X}Z - \nabla_{X}\nabla_{Y}Z + \nabla_{[X,Y]}Z,W),$$
where $X,Y,Z$ and $W$ are arbitrary vector fields defined on an open subset of $M$.
\end{definition}
\begin{remark}\label{R22}
The Riemann curvature tensor of $(M,g)$ satisfies the following orthogonal decomposition (cf. \cite{RGGHL}) 
\begin{equation}\label{Eq21}
    R =  \frac{s}{2n(n-1)}g \circ g + \frac{1}{n - 2}(Ric - \frac{s}{n}g) \circ g + W
\end{equation}
where $Ric$, $W$ and $s$ denote the Ricci, Weyl and scalar curvatures of $(M,g)$ respectively and $\circ$ denotes the  \textit{Kulkarni-Nomizu product} for symmetric $(0,2)$ tensors of $(M,g)$ defined by,
\begin{eqnarray*}
        (h\circ k)(X,Y,Z,W) = &&h(X,Z)k(Y,W) + h(Y,W)k(X,Z)
        - h(X,W)k(Y,Z)\\ &&- h(Y,Z)k(X,W), \ \ \ \forall\,h,\,k\,\in\,S^{2}(M).
\end{eqnarray*}
\end{remark}

 \begin{definition}\label{D23}
 A complete Riemannian manifold $(M,g)$ is said to be  
\begin{enumerate}
\item 
 a \textbf{Space form} if it has constant sectional curvature.
\item
\textbf{Einstein} if its Ricci curvature is a scalar multiple of the Riemannian metric, i.e., $Ric_{g} = \lambda g \ \ \text{ for some } \ \lambda \in \mathbb{R}.$
\item \textbf{locally conformally flat} if given any point $p \in M$, there exists an open set $\mathcal{U}$ containing $p$ and $f \in C^{\infty}(\mathcal{U})$ such that the metric $e^{2f}g|_{\mathcal{U}}$ is flat.
     \end{enumerate}
 \end{definition}
 \begin{remark}\label{R24}
A Riemannian manifold $(M
,g)$ of dimension at least $4$ is locally conformally flat if and only if its Weyl tensor vanishes (cf. \cite{RGGHL})
 \end{remark}
 \subsection{Quasi-Einstein manifolds.}\label{s21}
In this section, we briefly discuss a special class of Riemannian manifolds known as quasi-Einstein manifolds, which are certain generalizations of Einstein manifolds. We refer to \cite{RQE, LCFQEM, CGEGY} for detailed discussions on quasi-Einstein manifolds. 
\begin{definition}\label{D25}
 A Riemannian manifold $(M,g)$ is said to be
\begin{enumerate}
    \item a quasi-Einstein manifold if its Ricci curvature satisfies the identity:
    $$Ric_{g} + Hess_{g}(f) - \frac{1}{m}df \otimes df = \lambda g$$
    where $f \in C^{\infty}(M)$, $\lambda \in \mathbb{R}$, $0 < m \leq \infty$ and $Hess_{g}(f)(X,Y) = g(\nabla_{X}\nabla f,Y)$.
    \item a Ricci soliton if there exists a vector field $X \in \chi(M)$ such that
    $$Ric_{g} + \frac{1}{2}L_{X} g = \lambda g,$$
    where $\lambda \in \mathbb{R}$  and $L_{X}g$ is the Lie-derivative of $g$ with respect to $X$. In particular, if $X = \nabla f$ for some $f \in C^{\infty}(M)$, then $(M,g)$ is called a \textbf{gradient Ricci soliton}.
\end{enumerate}
\end{definition}
\begin{remark}\label{R26}
    If $m = \infty$, then a quasi-Einstein manifold $(M,g,f)$ becomes a gradient Ricci soliton, and if $m \in \mathbb{N}$, then $(M,g,f)$ corresponds to a warped product Einstein metric (\cite{CEWP}).
\end{remark}
\begin{definition}\label{D27}
A quasi-Einstein manifold $(M,g,f,m,\lambda)$ is said to be \textbf{rigid} if it reduces to an Einstein manifold. 
\end{definition}
\begin{remark}\label{R28}
    For a closed, oriented, connected, quasi-Einstein manifold, being rigid is equivalent to $f$ being a constant. This is not true for noncompact manifolds (see Proposition 4.2 of \cite{RQE}).
\end{remark}
\subsection{Riemannian and conformal submersions}\label{s22}
\hfill \break
Let $(M,g)$ and $(B,g_B)$ be Riemannian manifolds and $\pi: M \rightarrow B$ a smooth submersion. Given $p \in M$ and $\pi(p) = b$, let $F_{b}=\pi^{-1}(b)$ denote the \textit{fiber at $b$}, $\mathcal{V}_{p}:= Ker(d\pi_{p}) = T_{p}F_{b}$, and let $\mathcal{H}_{p}$ be the orthogonal complement of $\mathcal{V}_{p}$ in $T_{p}M$. \\
The distributions $\mathcal{V}: p \mapsto \mathcal{V}_{p}, \ p \in M$, and $\mathcal{H}: p \mapsto \mathcal{H}_{p}, \ p \in M$ are called the \textit{vertical} and \textit{horizontal distributions} of $\pi$ respectively. 
\begin{definition}\label{D29}
 A (smooth) submersion $\pi \colon (M,g) \rightarrow (B,g_{B})$ as above is called 
\begin{enumerate}
    \item a \textbf{Riemannian submersion} if $g|_{\mathcal{H}} = \pi^{*}g_{B}$.
   \item  a \textbf{conformal submersion} if $g|_{\mathcal{H}} = e^{2f}\pi^{*}g_{B}$ for some $f \in C^{\infty}(M)$. In such a case, we will denote the submersion map $\pi$ by $\pi_f$.
\end{enumerate}
\end{definition}
\begin{remark}\label{R210}
A vector field $\xi$ on $M$ is called \textit{vertical} (respectively, \textit{horizontal}) if $\xi_{p} \in \mathcal{V}_{p}$( respectively, $\mathcal{H}_{p}$), $\forall\, p \in M$.
\end{remark}
\begin{definition} \label{D211}
Given a Riemannian submersion $\pi\colon(M,g) \rightarrow (B,g_{B})$, the \textbf{fundamental tensors} $T$ and $A$ are defined as follows, 
\begin{equation}\label{Eq22}
\begin{aligned}
T_{E}F = \mathcal{H}\nabla_{\mathcal{V}E}\mathcal{V}F + \mathcal{V}\nabla_{\mathcal{V}E}\mathcal{H}F\\
A_{E}F = \mathcal{H}\nabla_{\mathcal{H}E}\mathcal{V}F + \mathcal{V}\nabla_{\mathcal{H}E}\mathcal{H}F
\end{aligned}
\end{equation}
where $E,F \in \chi(M)$, $\nabla$ is the Levi-Civita connection of $(M,g)$ and $\mathcal{H}E$ and $\mathcal{V}E$ respectively denote the horizontal and vertical components of $E$.
\end{definition}  
\begin{definition}\label{D212}
    Given a conformal submersion $\pi_f \colon (M,g) \rightarrow (B,g_{B})$ as in \cref{D29},  consider the metric $g_{\pi}$ on $M$ given by $g_{\pi} = e^{-2f}g.$ Then $\pi_{f} \colon (M,g_{\pi}) \rightarrow (B,g_{B})$ defines a Riemannian submersion.  $T^{f}$ 
and $A^{f}$ denote the fundamental tensors of the Riemannian submersion $\pi_{f}: (M,g_{\pi}) \rightarrow (B,g_{B})$.
\end{definition}
\begin{definition}\label{D213}
A conformal submersion $\pi_{f} \colon (M,g) \rightarrow (B,g_{B})$ is said to be \textbf{rigid} if it reduces to a Riemannian submersion up to homothety, i.e., $f$ is a constant function.
\end{definition}
\subsubsection{\textbf{Structural equations for conformal submersions.}}
\hfill \break
Given a conformal submersion $\pi_f \colon (M,g) \rightarrow (B,g_{B})$ as in \cref{D29}, 
and the metric $g_{\pi} = 
e^{-2f}g$ on $M$, let $\nabla, \tilde{\nabla}$ denote the Levi-Civita connections of $(M,g)$ and $(M,g_{\pi})$ respectively. 
Let $U, V$ denote vertical vector fields and $X, Y$ denote horizontal vector fields defined in a neighbourhood of an arbitrary point $p \in M$. Then, the following structural equations hold:
 \begin{equation}\label{Eq23}
 \begin{aligned}
 \mathcal{H}\nabla_{U}V =& T^{f}_{U}V - \mathcal{H}(\nabla f)g(U,V) \\
\mathcal{V}\nabla_{U}V =& \mathcal{V}\tilde{\nabla}_{U}V + U(f)V + V(f)U - \mathcal{V}(\nabla f)g(U,V)\\
\mathcal{H} \nabla_{X}U =& A^{f}_{X}U + U(f)X\\
\mathcal{V} \nabla_{X}U =& \mathcal{V} \tilde{\nabla}_{X}U + X(f)U\\
\mathcal{H}\nabla_{U}X =& \mathcal{H}\tilde{\nabla}_{U}X + U(f)X \\
\mathcal{V}\nabla_{U}X =& T^{f}_{U}X +  X(f)U\\
\mathcal{H}\nabla_{X}Y =& \mathcal{H}\tilde{\nabla}_{X}Y + X(f)Y + Y(f)X - \mathcal{H}(\nabla f)g(X,Y)\\
\mathcal{V}\nabla_{X}Y =& A^{f}_{X}Y - \mathcal{V}(\nabla f)g(X,Y).
 \end{aligned}
 \end{equation}
 \begin{definition}\label{D214}
Let $\pi_{f}  \colon (M,g) \rightarrow (B,g_{B})$ be a conformal submersion. The fibers of $\pi_{f}$ are said to be \textbf{totally umbilical} (that is, all fibers are totally umbilical submanifolds of $(M,g)$) if there exists a horizontal vector field $X$ such that $\mathcal{H}\nabla_{U}V = g(U, V)X$ for all vertical vector fields $U, V$. The geometric interpretation of totally umbilical fibers is that the fibers bend uniformly in all directions (cf. \cite{ONB}).
\end{definition}
 Recall that Riemannian submersion is characterized by its fundamental tensors $T$ and $A$.  Analogously, the fundamental tensors of a conformal submersion are denoted by $T^{f}$ and $A^{f}$. In a Riemannian submersion, $T$ vanishes if and only if the fibers are totally geodesic. In contrast, for a conformal submersion, the vanishing of $T^{f}$ implies that the fibers are totally umbilical. Moreover, in a conformal submersion, the fibers are totally geodesic if and only if $T^{f}_{U}V = g(U, V)\mathcal{H}(\nabla f)$. In both Riemannian and conformal submersion, the tensors $A$ and $A^{f}$ vanish if and only if the horizontal distribution is integrable. 
\begin{lemma}\label{L215}
    Let $p \in M$ and $\mathcal{U}$ a neighborhood of $p$ in $M$. Let $U, V$ be vertical vector fields and $X, Y$ horizontal vector fields defined in $\mathcal{U}$ with the conditions $|U| = |V| = |X| = |Y| = 1, |U \wedge V| = |X \wedge Y| = 1$. Let $K_{p}, \check{K}_{p}, \hat{K}_{\pi_{f}(p)}$ respectively denote the sectional curvatures of $(M,g)$ and $(F_{\pi_{f}(p)},g_{F_{\pi_{f}(p)}})$ at $p$ and $(B,g_{B})$ at $\pi_{f}(p)$. The following relations hold.
\begingroup\makeatletter\def\f@size{11}\check@mathfonts
\def\maketag@@@#1{\hbox{\m@th\normalfont#1}}
\begin{align}\label{Eq24}
K_{p}(U,V) =& \check{K}_{p}(U,V) - g_{p}(T^{f}_{V}V,T^{f}_{U}U) + g_{p}(T^{f}_{U}V,T^{f}_{U}V)\\& + T^{f}_{U}U(f)(p) + T^{f}_{V}V(f)(p) - |\mathcal{H}(\nabla f)|_{p}^{2} \nonumber   
\end{align}
\begin{align}\label{Eq25}
K_{p}(X,U) =& 2X(f)(p)g_{p}(T^{f}_{U}U,X) + g_{p}((\nabla_{X}T^{f})_{U}U,X) + |A^{f}_{X}U|_{p}^{2}\nonumber\\& -|T^{f}_{U}X|_{p}^{2} + (\mathcal{H}\nabla_{X}X)(f)(p) - (X(f))^{2}(p) - X^{2}(f)(p)\\&   + (\mathcal{V}\nabla_{U}U)(f)(p) - (U(f))^{2}(p) - U^{2}(f)(p) \nonumber
\end{align}
\begin{align}\label{Eq26}
K_{p}(X,Y) =& e^{-2f(p)}\hat{K}_{\pi_{f}(p)}(X,Y) - 3|A^{f}_{X}Y|_{p}^{2}+ |\nabla f|_{p}^{2} - Hess f(X,X)(p)\\& - (X(f))^{2}(p) -  Hess f(Y,Y)(p)- (Y(f))^{2}(p)\nonumber  \end{align}
\endgroup
\end{lemma}
\begin{proof}
     Let $U, V, W, W'$ be vertical vector fields and $X, Y, Z, Z'$ horizontal vector fields defined in $\mathcal{U}$. Let $R$, $\check{R}$, and $\hat{R}$ denote the Riemannian curvature tensors of $(M,g)$, $(F_{\pi_{f}(p)},g_{F_{\pi_{f}(p)}})$, and $(B,g_{B})$ respectively. Then the curvature tensors are given below 
     \begingroup\makeatletter\def\f@size{11}\check@mathfonts
\def\maketag@@@#1{\hbox{\m@th\normalfont#1}}
\allowdisplaybreaks{
\begin{align}\label{Eq27}
  R_{p}(U,V,W,W') =& g_{p}(\check{R}(U,V)W,W') - g_{p}(T^{f}_{V}W',T^{f}_{U}W) + g_{p}(T^{f}_{V}W,T^{f}_{U}W')\\& + (T^{f}_{U}W)(f)(p)g_{p}(V,W') - (T^{f}_{V}W)(f)(p)g_{p}(U,W')\nonumber\\&
 + g_{p}(U,W)T^{f}_{V}W'(f)(p) - g_{p}(V,W)T^{f}_{U}W'(f)(p)\nonumber\\&
 +  (g_{p}(U,W')g_{p}(V,W) - g_{p}(U,W)g_{p}(V,W'))|\mathcal{H}(\nabla f)|_{p}^{2},\nonumber
 \end{align}
\begin{align}\label{Eq28}
R_{p}(X,Y,Z,Z') =& e^{2f(p)}\pi_{f}^{*}(\hat{R}_{\pi_{f}(p)})(X,Y,Z,Z') - 2g_{p}(A^{f}_{X}Y,A^{f}_{Z}Z')\\& + g_{p}(A^{f}_{Y}Z,A^{f}_{X}Z')- g_{p}(A^{f}_{X}Z,A^{f}_{Y}Z')\nonumber\\& -Hess f(X,Z)(p)g_{p}(Y,Z') - (X(f)Z(f))(p)g_{p}(Y,Z')\nonumber\\&  + Hess f(Y,Z)(p)g_{p}(X,Z') + (Y(f)Z(f))(p)g_{p}(X,Z')\nonumber\\&- Hess f(Y,Z')(p)g_{p}(X,Z) - (Y(f)Z'(f))(p)g_{p}(X,Z)\nonumber\\& + Hess f(X,Z')(p)g_{p}(Y,Z) + (X(f)Z'(f))(p)g_{p}(Y,Z)\nonumber\\& + (g_{p}(X,Z)g_{p}(Y,Z')- g_{p}(X,Z')g_{p}(Y,Z))|\nabla f|_{p}^{2}.\nonumber 
\end{align}
\begin{align}\label{Eq29}
    R_{p}(X,U,Y,V) =& -g_{p}((\nabla_{X}T^{f})_{U}Y,V) - g_{p}(T^{f}_{U}X,T^{f}_{V}Y) -X(f)g_{p}(T^{f}_{U}Y,V)\nonumber\\& +(\mathcal{H}\nabla_{X}Y)(f)g_{p}(U,V) + (\mathcal{V}\nabla_{U}V)(f)g_{p}(X,Y) + g_{p}(\nabla_{U}A^{f}_{X}Y,V)\\& -Y(f)g_{p}(T^{f}_{U}X,V) - X(f)Y(f)g_{p}(U,V) - U(f)V(f)g_{p}(X,Y)\nonumber\\& - g_{p}(X,Y)UV(f) -g_{p}(U,V)XY(f) + g_{p}(A^{f}_{X}V,A^{f}_{Y}U)\nonumber\\&  + V(f)g_{p}(A^{f}_{Y}U,X) - U(f)g_{p}(A^{f}_{X}Y,V)\nonumber
\end{align}}
\endgroup
\end{proof}
Using these curvature equations, we can conclude the following results.
\begin{corollary}
Let $p \in M$ and $r_{p},\check{r}_{p}$ denote the Ricci curvatures of $(M,g)$ and $(F_{\pi_{f}(p)},g_{F_{\pi_{f}(p)}})$ at $p$, respectively and the Ricci curvature of $(B,g_{B})$ at $\pi_{f}(p)$ by $\hat{r}_{\pi_{f}(p)}$. Let $\{X_{i}\}_{i\in I}$, $\{U_{j}\}_{j\in J}$ respectively denote local orthonormal frames of the horizontal and vertical distributions at $p$. The following relations hold.
    \begingroup\makeatletter\def\f@size{11}\check@mathfonts
\def\maketag@@@#1{\hbox{\m@th\normalfont#1}}
\allowdisplaybreaks{
\begin{align}\label{Eq210}
r_{p}(U,V) =& \check{r}_{p}(U,V)- g_{p}(N^{f},T^{f}_{U}V) + (\delta T^{f})(U,V)(p) + g_{p}(A^{f}U,A^{f}V) + dim(F)T^{f}_{U}V(f)(p)\nonumber\\& +g_{p}(U,V)N^{f}(f)(p)  - dim(F)g_{p}(U,V)|\mathcal{H}(\nabla f)|_{p}^{2}  -g_{p}(U,V) (\Delta f)|_{\mathcal{H}}(p)\\& + g_{p}(U,V)dim(B)|\mathcal{V}(\nabla f)|_{p}^{2} + dim(B)((\mathcal{V}\nabla_{U}V)(f)- UV(f) -U(f)V(f))(p)\nonumber;
\end{align}
\begin{align}\label{Eq211}
 r_{p}(X,U) =& g_{p}((\check{\delta}T^{f})(U),X) + g_{p}(\nabla_{U}N^{f},X) - g_{p}((\hat{\delta}A^{f})(X),U) - 2g_{p}(T^{f}_{U},A^{f}_{X})\nonumber\\&- (dim(B) - 3)A^{f}_{X}U(f)(p) - (dim(F) -1)T^{f}_{U}X(f)(p)\\&- (dim(M) - 2)(Hessf(X,U) + X(f)U(f))(p);\nonumber
\end{align}
\begin{align}\label{Eq212}
 r_{p}(X,Y) =& \pi_{f}^{*}(\hat{r}_{\pi_{f}(p)})(X,Y) - 2g_{p}(A^{f}_{X},A^{f}_{Y})  - g_{p}(T^{f}X,T^{f}Y)  +  g_{p}(\nabla_{X}N^{f},Y)\nonumber\\& + \sum_{j}g_{p}(\nabla_{U_{j}}A^{f}_{X}Y,U_{j}) - (dim(M) -2)(Hess f(X,Y) + X(f)Y(f))(p)\\& - dim(F)A^{f}_{X}Y(f)(p) + (dim(M) - 2)g_{p}(X,Y)|\nabla f|_{p}^{2}\nonumber\\& + X(f)(p)g_{p}(N^{f},Y) + Y(f)(p)g_{p}(N^{f},X)- g_{p}(X,Y)(N^{f}(f) + \Delta f)(p).\nonumber
\end{align}}
\endgroup
where 
$N^{f} = Trace(T^{f})$, $(\delta T^{f})(U, V) = \sum_{i}g((\nabla_{X_{i}}T^{f})_{U}V, X_{i})$, and for any $(2,1)$ tensor $E$ on $M$\\
$\check{\delta}E = -\sum_{j}(\nabla_{U_{j}}E)_{U_{j}}$, and $\hat{\delta}E = -\sum_{i}(\nabla_{X_{i}}E)_{X_{i}}$.\\
\end{corollary}
\begin{corollary}\label{C217}
The scalar curvatures of $(M,g)$, $(F_{\pi_{f}(p)},g_{F_{\pi_{f}(p)}})$, and $(B,g_{B})$ denoted by $s,\check{s},\hat{s}$ respectively, are related as follows
\begin{align}\label{Eq213}
s &= e^{-2f}\hat{s}\circ \pi_{f} + \check{s}  - |A^{f}|^{2} - |T^{f}|^{2} + div(N^{f}) - \hat{\delta}N^{f}\nonumber\\&+ dimB(dimM + dimF -3)|\nabla f|^{2} + (2-2dimB + dimF)N^{f}(f)\\&-(2dimF -2)\Delta f|_{\mathcal{H}}- 2dimB\Delta f-(dimF -2 + dim(F)^{2})|\mathcal{H}(\nabla f)|^{2},\nonumber
\end{align}
where $\hat{\delta}N^{f} = -\sum_{j}(\delta T^{f})(U_{j},U_{j})$.
\end{corollary}
  \begin{remark}\label{R218}
Here, we will use the sign conventions given below. 
\begin{enumerate}
\item Given a vector field $X$ in a Riemannian manifold $(M,g)$, the divergence $div(X)$ of $X$ is defined point-wise as $div(X) = \sum_{i} g(\nabla_{e_{i}}X,e_{i})$, where $\{e_{i}\}$ is an orthonormal basis at that point.
\item Given $h \in C^{\infty}(M)$, $\Delta h := div(\nabla h)$.
\item Given a conformal submersion $\pi_{f}: (M,g) \rightarrow (B,g_{B})$ and $h \in C^{\infty}(M)$, define $\Delta h|_{\mathcal{H}} := \sum_{i}g(\nabla_{X_{i}}\nabla h,X_{i})$ and $\Delta h|_{\mathcal{V}} := \sum_{j}g(\nabla_{U_{j}}\nabla h,U_{j})$
, where $\{X_{i}\}_{i\in I}$, $\{U_{j}\}_{j\in J}$ respectively denote local orthonormal frames of the horizontal and vertical distributions.
\end{enumerate}
  \end{remark}
\begin{remark}\label{R219}
From now on, we only consider closed-orientable Riemannian manifolds and conformal submersions with total space of dimension at least $4$.
 \end{remark}
 \section{Conformal submersions with locally conformally flat total space}\label{s3}
\noindent Although the title refers to locally conformally flat total spaces, we consider conformal submersions whose total spaces are Riemannian products of 
locally conformally flat and Einstein manifolds (where the Einstein components are possibly trivial), and study their rigidity. We further assume that the base and the fibers have the same dimensions. In such a case, we provide suitable curvature conditions imposing rigidity.
\subsection{The total space is the product of locally conformally flat and Einstein manifolds.}\label{s31} In conformal submersions 
whose total spaces are Riemannian products of locally conformally flat and Einstein manifolds (with possibly no Einstein components), we make the following observations about the scalar curvature of the components of the total space.
\begin{proposition}\label[proposition]{P31}
\begin{enumerate}
\item  Let $\pi_{f}\colon (M,g) \rightarrow (B,g_{B})$ be a conformal submersion such that $(M,g)$ is locally conformally flat and  $dim(B) = dim(F)$ where $F_{p}$ denotes the fiber at $p \in B$. Then the scalar curvature of $(M,g)$ satisfies the following equation.
\begin{equation}\label{Eq31}
\footnotesize
\frac{ns}{2(n-1)}= 2(|A^{f}|^{2} - |T^{f}|^{2} - \hat{\delta}N^{f}) -(n-4)N^{f}(f) - n\Delta f + \frac{n(n-2)}{2}|\nabla f|^{2},
\end{equation}
where $n = dim(M)$.
\item  More generally, consider a conformal submersion $\pi_{f}\colon (M,g) \rightarrow (B,g_{B})$ where $(M,g)$ is the Riemannian product of locally conformally flat manifolds each of dimension at least $4$ such that the horizontal and the vertical distributions restricted to each component of $(M,g)$ have the same dimension. Then,
\begin{equation}\label{Eq32}
\footnotesize
\sum_{i} \frac{n_{i}s_{i}}{2(n_{i} - 1)} = 2(|A^{f}|^{2} - |T^{f}|^{2} - \hat{\delta}N^{f}) -(n-4)N^{f}(f) - n\Delta f + \frac{n(n-2)}{2}|\nabla f|^{2},
\end{equation}
where $n_{i}$ and $s_{i}$ respectively denote the dimension and the scalar curvature of the $i^{th}$ component of $(M,g)$, and $n = dim(M)$.  
\item  Let $\pi_{f}\colon (M \times N,g_{M} + g_{N}) \rightarrow (B,g_{B})$ be a conformal submersion where $(M,g_{M})$ is as in $(2)$.
Let $(N,g_{N})$ be the Riemannian product of Einstein manifolds where the tangent bundle of each component of $(N,g_{N})$ is completely contained either in the vertical or the horizontal distribution of $\pi_f$, and $dim(F) = dim(B)$ where $F_{p}$ denotes the fiber at $p \in B$. Then,
 \begin{equation}\label{Eq33}
 \footnotesize
 \begin{aligned}
\sum_{i} \frac{n_{i}s_{i}}{2(n_{i} - 1)} =& 2(|A^{f}|^{2} - |T^{f}|^{2} - \hat{\delta}N^{f}) -(n-4)N^{f}(f) - (\frac{n + l}{2})(\Delta f - \frac{n-2}{2}|\nabla f|^{2})\\&
- \frac{e}{2}(div(\mathcal{H} \nabla f) - \frac{n-2}{2}|\mathcal{H}\nabla f|^{2}),
\end{aligned}
\end{equation}
where $n_{i}$ and $s_{i}$ are the dimensions and the scalar curvatures of the $i^{th}$ component of $(M,g)$, respectively and $l = dim(M)$, $e = dim(N)$ and $l+e = n$.      
\end{enumerate}
\end{proposition}
\begin{proof}
(1) Let $p \in M$. Let $\{X_{i}\}{i\in I}$, $\{U_{j}\}_{j\in J}$ respectively denote local orthonormal frames of the horizontal and vertical distributions at $p$. Since the Weyl curvature $W = 0$, using the orthogonal decomposition of the Riemann curvature tensor given in \cref{Eq21}, it follows that 
\begin{equation}\label{Eq34}
\sum_{j \neq k}R(U_{k},U_{j},U_{k},U_{j}) = \frac{m -2}{n -2}r(U_{k},U_{k}) + \sum_{j}\frac{r(U_{j},U_{j})}{n - 2} - \frac{s(m - 1)}{(n-1)(n-2)},
\end{equation}
where $m = n/2$.\\
 \cref{Eq27}, \cref{Eq210} and \cref{Eq34} gives, 
\begin{equation}\label{Eq35}
\begin{aligned}
\sum_{j}\frac{r(U_{j},U_{j})}{n - 2} =& \frac{m}{n -2}r(U_{k},U_{k})+ \frac{s(m -1)}{(n-1)(n-2)}+ g(T^{f}_{U_{k}},T^{f}_{U_{k}})\\& + (m -2)T^{f}_{U_{k}}U_{k}(f)+ |\mathcal{H}(\nabla f)|^{2} + \Delta f|_{\mathcal{H}}- \delta T^{f}(U_{k},U_{k})\\& - g(A^{f}U_{k},A^{f}U_{k}) + m(Hessf(U_{k},U_{k}) + U_{k}(f)^{2}- |\nabla f|^{2}).
\end{aligned}
\end{equation}
Similarly, \cref{Eq21}, \cref{Eq28} and \cref{Eq212} imply
\allowdisplaybreaks{
\begin{equation}\label{Eq36}
\begin{aligned}
\sum_{i}\frac{r(X_{i},X_{i})}{n - 2} =& \frac{m}{n -2}r(X_{l},X_{l})+ \frac{s(m -1)}{(n-1)(n-2)}+ g(T^{f}X_{l},T^{f}X_{l}) -g(A^{f}_{X_{l}},A^{f}_{X_{l}})\\& - |\mathcal{H}(\nabla f)|^{2} - \Delta f|_{\mathcal{H}}+ m(Hessf(X_{l},X_{l}) + X_{l}(f)^{2})\\&-(m-1) |\nabla f|^{2} - g(\nabla_{X_{l}}N^{f},X_{l}) - 2X_{l}(f)g(N^{f},X_{l})  + N^{f}(f)  + \Delta f.
\end{aligned}
\end{equation}}
Adding \cref{Eq35} and \cref{Eq36} gives, 
\allowdisplaybreaks{
   \begin{equation}\label{Eq37}
       \begin{aligned}
           \frac{s}{(n-1)(n-2)}= &\frac{mr(U_{k},U_{k})}{n-2} + \frac{mr(X_{l},X_{l})}{n-2} + g(T^{f}X_{l},T^{f}X_{l})+ g(T^{f}_{U_{k}},T^{f}_{U_{k}})\\&+(m-2)T^{f}_{U_{k}}U_{k}(f) -g(A^{f}_{X_{l}},A^{f}_{X_{l}}) - g(A^{f}U_{k},A^{f}U_{k})+m(Hessf(X_{l},X_{l})\\& + X_{l}(f)^{2}) + m(Hessf(U_{k},U_{k}) + U_{k}(f)^{2}) - (n-1)|\nabla f|^{2}\\& - \delta T^{f}(U_{k},U_{k}) - g(\nabla_{X_{l}}N^{f},X_{l}) - 2X_{l}(f)g(N^{f},X_{l}) + N^{f}(f) + \Delta f.
       \end{aligned}
   \end{equation}}
   Taking summation over $\{U_{k}\}$'s and $\{X_{l}\}$'s,
      \begin{equation}\label{Eq38}
        \begin{aligned}
            \frac{ns}{2(n-1)}=& 2(|A^{f}|^{2} - |T^{f}|^{2} - \hat{\delta}N^{f}) -(n-4)N^{f}(f) - n\Delta f + \frac{n(n-2)}{2}|\nabla f|^{2}.
        \end{aligned}
    \end{equation}
    Proof of $(2)$ and $(3)$ are similar to $(1)$.
\end{proof}
\noindent Employing the divergence theorem and the weak maximal principle in \cref{P31} culminates into the following result.  
\begin{theorem}\label{T32}
A conformal submersion $\pi_{f}\colon (M,g) \rightarrow (B,g_{B})$ as described in \cref{P31}(2) is rigid if either of the following conditions holds.
\begin{enumerate}
\item $T^{f}$ vanishes and $s \leq \sum_{i} \frac{s_{i}}{1-n_{i}}$ where $s_i$ and $n_i$ are as mentioned in \cref{P31}(2).
\item $N^{f}$ is parallel with respect to $g$, the horizontal distribution is integrable and $s \geq \sum_{i} \frac{s_{i}}{1-n_{i}}$.
\end{enumerate} 
Additionally, if $(M,g)$ is locally conformally flat, then $(M,g)$ is scalar flat and locally a Riemannian product.
\end{theorem}
We further observe the following scalar curvature properties for the total space of a conformal submersion given in \cref{T32}. 
\begin{proposition}\label{P33}
Let $\pi_{f}\colon (M,g) \rightarrow (B,g_{B})$ be a conformal submersion such that $(M,g)$ is as defined in \cref{T32}. Moreover, if the scalar curvature of each component of $(M,g)$ have the same sign, then:
\begin{enumerate}
    \item The scalar curvature of $(M,g)$ is non-negative if $T^{f}$ vanishes.
    \item The scalar curvature of $(M,g)$ is non-positive if  $N^{f}$ is parallel with respect to $g$ and the horizontal distribution is integrable.
    \item $(M,g)$ is scalar flat, and $\pi_{f}$ is rigid if the horizontal distribution is integrable and $T^{f}$ vanishes.
\end{enumerate}
\end{proposition}
\begin{proof}
The following is a proof for locally conformally flat total space. The general case follows analogously.\\
    $(1)$ If $T^{f}$ vanishes,
       \begin{equation}\label{Eq39}
        \begin{aligned}
            \frac{ns}{2(n-1)}=& 2|A^{f}|^{2}  - n\Delta f + \frac{n(n-2)}{2}|\nabla f|^{2}.
        \end{aligned}
   \end{equation}
    The scalar curvature of $(M,g)$ is non-negative by the divergence theorem.\\
    $(2)$ If $N^{f}$ is parallel and the horizontal distribution is integrable,
       \begin{equation}\label{Eq310}
        \begin{aligned}
            \frac{ns}{2(n-1)}=& -2|T^{f}|^{2}  -\frac{2(n-4)}{n}|N^{f}|^{2} - n\Delta f + \frac{n(n-2)}{2}|\nabla f|^{2}.
        \end{aligned}
    \end{equation}
    Multiply \cref{Eq310} by $e^{-\frac{n-2}{2}f}$,
       \begin{equation}\label{Eq311}
        \begin{aligned}
            \frac{ns}{2(n-1)}e^{-\frac{n-2}{2}f}=&  - 2e^{-\frac{n-2}{2}f}|T^{f}|^{2}  -\frac{2(n-4)}{n}e^{-\frac{n-2}{2}f}|N^{f}|^{2} - ndiv(e^{-\frac{n-2}{2}f} \nabla f).
        \end{aligned}
    \end{equation}
    The scalar curvature of $(M,g)$ is non-positive by the divergence theorem.\\
    $(3)$ If the horizontal distribution is integrable and $T^{f} = 0$,
     \begin{equation}\label{Eq312}
        \begin{aligned}
            \frac{ns}{2(n-1)}e^{-\frac{n-2}{2}f}=&  - ndiv(e^{-\frac{n-2}{2}f} \nabla f).
        \end{aligned}
    \end{equation}
    $(M,g)$ is scalar flat by the divergence theorem, and $\pi_{f}$ is rigid by the weak maximal principle.
\end{proof}
More generally, if the total space of a conformal submersion is as in \cref{P31}(3), we obtain the following rigidity result similar to \cref{T32}.
\begin{theorem}\label{T34}
A conformal submersion $\pi_{f}\colon (M,g) \rightarrow (B,g_{B})$ as described in  \cref{P31}(3) is
rigid if either of the following conditions holds.
\begin{enumerate}
\item $T^{f}$ vanishes and $s_{M} \leq \sum_{i} \frac{s_{i}}{1-n_{i}}$.
\item $N^{f}$ is parallel with respect to $g$, the horizontal distribution is integrable and $s_{M} \geq \sum_{i} \frac{s_{i}}{1-n_{i}}$.
\end{enumerate} 
where $s_{i}$ is the scalar curvature of the $i^{th}$ component of $(M,g_{M})$ and $s_{M}$ is the scalar curvature of $(M,g_{M})$.
\end{theorem}
In the next two subsections, we prove more rigidity results by putting further conditions on the total space or the base of a conformal submersion as described in this part.  
\subsection{When the total space is quasi-Einstein}\label{s32}
\begin{theorem}\label{T35}
Let $\pi_{f}: (M,g) \rightarrow (B,g_{B})$ be a conformal submersion where $(M,g)$ is as described in \cref{T32} with the scalar curvatures of each component of $(M,g)$ having the same sign. If $(M,g,h,m,\lambda)$ is also quasi-Einstein, then it is Ricci-flat and $\pi_{f}$ is rigid if one of the following conditions holds.
\begin{enumerate}
\item $\lambda$ is non-negative, $N^{f}$ is parallel with respect to $g$, and the horizontal distribution is integrable.
\item $\lambda$ is non-positive, and $T^{f} = 0$.
\item The horizontal distribution is integrable, and $T^{f} = 0$.
\end{enumerate}
Furthermore, if $(M,g)$ is locally conformally flat, then $(M,g)$ is locally a Riemannian product.    
\end{theorem}
\begin{proof}
The following is proof for locally conformally flat total space. The general case follows analogously.\\
We have,
\begin{equation}\label{Eq313}
Ric + Hessh - \frac{1}{m}dh\otimes dh = \lambda g.
\end{equation}
By taking trace, 
\begin{equation}\label{Eq314}
\lambda n-s = \Delta h - \frac{1}{m}|\nabla h|^{2}
\end{equation}
\begin{equation}\label{Eq315}
e^{-h/m}(\lambda n - s) = div(e^{-h/m}\nabla h).
\end{equation}
Using the divergence theorem and \cref{P33}, $\lambda = s = 0$. Applying the weak maximal principle in \cref{Eq314}, $h$ turns out to be constant, and hence $(M,g)$ is Ricci-flat. By \cref{T32}, we can 
conclude that $(M,g)$ is locally a Riemannian product, and $\pi_{f}$ is rigid.
\end{proof}
\subsection{When either the base or the fiber is quasi-Einstein.}\label{s33}
\hfill \break
The following lemma can be derived by using the same method employed to prove \cref{P31} and hence the proof is skipped..
\begin{lemma}\label[lemma]{L36}
Let $\pi_{f}\colon (M,g) \rightarrow (B,g_{B})$ be a conformal submersion such that $(M,g)$ is locally conformally flat and  $dim(B) = dim(F)$ where $F_{p}$ denotes the fiber at $p \in B$. Then,
\begin{equation}\label{Eq316}
\begin{aligned}
e^{-2f}\hat{s} \circ \pi_{f} + \check{s} =& \frac{5n -4}{n}|A^{f}|^{2}- \frac{3n - 4}{n}(|T^{f}|^{2}+ \check{\delta}N^{f}) - 
div(N^{f})\\&-\frac{n^{2}- 12n + 16}{2n}N^{f}(f) - (n-2)div(\mathcal{V}(\nabla f))\\&+\frac{(n-2)(3n-4)}{4}|\mathcal{V}(\nabla 
f)|^{2}. 
\end{aligned}
\end{equation}
\end{lemma}
We further observe some properties of the scalar curvatures of the base and fibers of a conformal submersion as given in \cref{L36}.
\begin{proposition}\label{P37}
Let $\pi_{f}\colon (M,g) \rightarrow (B,g_{B})$ be a conformal submersion as in \cref{L36}. 
If the scalar curvatures of the fibers (respectively, the base) have a particular sign and
\begin{enumerate}
\item $T^{f} = 0$, and the scalar curvature of the base (respectively, the fibers) is non-positive, then the scalar curvatures of the fibers (respectively, the base) are non-negative. 
\item The horizontal distribution is integrable, $T^{f}$ is parallel with respect to $g$, and the scalar curvature of the base (respectively, the fibers) is non-negative, then the scalar curvatures of the fibers(respectively, the base) are non-positive.
\end{enumerate}
\end{proposition}
\begin{proof}
$(1)$. By substituting $T^{f} = 0$ in \cref{Eq316},
\begin{equation}\label{Eq317}
\begin{aligned}
e^{-2f}\hat{s} \circ \pi_{f} + \check{s} =& \frac{5n -4}{n}|A^{f}|^{2} - (n-2)div(\mathcal{V}(\nabla f)) +\frac{(n-2)(3n-4)}{4}|\mathcal{V}(\nabla f)|^{2}. 
\end{aligned}
\end{equation}
Applying the divergence theorem in \cref{Eq317} gives the result.\\
$(2)$. By substituting $A^{f} = 0$ and $\nabla T^{f} = 0$ ($\nabla T^{f} = 0$ implies $|T^{f}|^{2} = N^{f}(f)$) and multiplying $e^{-\frac{3n - 4}{4}f}$ in \cref{Eq316},
\begin{equation}\label{Eq318}
\begin{aligned}
e^{-\frac{3n - 4}{4}f}\hat{s} \circ \pi_{f} + e^{-\frac{3n - 4}{4}f}\check{s} =& - \frac{(n-2)(n-4)}{2n}|T^{f}|^{2}e^{-\frac{3n - 4}{4}f}  - (n-2)(div( e^{-\frac{3n - 4}{4}f}\mathcal{V}(\nabla f)). 
\end{aligned}
\end{equation}
The result follows by applying the divergence theorem in \cref{Eq318}.
\end{proof}
\noindent Employing the divergence theorem and the weak maximal principle in \cref{L36}, \cref{P31}, and then using \cref{P37}, we conclude the following result.
\begin{theorem}\label{T38}
Let $\pi_{f}\colon (M,g) \rightarrow (B,g_{B})$ be a conformal submersion as in \cref{L36}. If the scalar curvatures of the fibers (respectively, the base) have a particular sign and $(F,g_{F},h,m,\lambda)$ (respectively, $(B,g_{B},h,m,\lambda)$) is quasi-Einstein, then $(F,g_{F},h)$ (respectively, $(B,g_{B},h)$) is Ricci-flat, $(B,g_{B})$ (respectively, $(F,g_{F})$) is scalar flat, and also $f$ is constant along fibers whenever either of the following conditions holds.
\begin{enumerate}
\item $T^{f} = 0$, the scalar curvature of the base (respectively, the fibers) is non-positive, and $\lambda \leq 0$.
\item The horizontal distribution is integrable, $T^{f}$ is parallel with respect to  $g$, the scalar curvature of the base (respectively, the fibers) is non-negative, and $\lambda \geq 0$.
\end{enumerate}
In both cases, the horizontal distribution is integrable and $T^{f}$ vanishes. Additionally, if the scalar curvature of $(M,g)$ has a particular sign, then $\pi_{f}$ is rigid.
\end{theorem}
\begin{remark}\label{R39}
Assuming the scalar curvature of the total space has a particular sign is not a strong condition since any closed Riemannian manifold admits a metric in its conformal class that has constant scalar curvature due to the Yamabe problem (see \cite{RSYP}).
\end{remark}
\noindent The following example shows that $dim(F) = dim(B)$ is necessary in \cref{T38}.
\begin{example}\label[example]{E310}
Let $M = S^{1} \times S^{3}$ with metric $g = g_{S^{1}} + e^{2f}g_{S^{3}}$ where $f$ is a non-constant function on $S^{1}$. Consider the map $\pi_{f} \colon (M,g) \rightarrow (S^{3},g_{3})$ defined as $\pi_{f}(x,y) = y$. One can verify by direct computation that $(M,g)$ is locally conformally flat, and all the conditions of \cref{T38} are satisfied by $\pi_{f}$ (note that here, the scalar curvature of $(M,g)$ is constant only up to conformality) except the dimension condition, but $\pi_{f}$ is not rigid.   
\end{example}
\section{Conformal submersions with Einstein total space}\label{s4}
In this section, we analyse the rigidity of conformal submersions where either the total space or its universal cover is the Riemannian product of closed-orientable Einstein manifolds. In both cases, the Ricci tensor turns out to be parallel. This section is divided into three subsections. The first subsection discusses the rigidity when the horizontal distribution is integrable and $N^{f}$ is parallel which is a more general assumption than that of $T^{f}$ being parallel (cf. Lemma $4.6$ of \cite{JML}). In the second subsection, we discuss the rigidity when $N^{f}$ vanishes (a weaker assumption than $T^{f} = 0$, particularly when the dimension of fibers is at least $2$) and the horizontal distribution is not necessarily integrable. 
The last subsection deals with the rigidity when fibers are totally geodesic.
Throughout this section, we will have the following two assumptions (even when it is not explicitly mentioned): (a). the total space or its universal cover is the Riemannian product of closed Einstein manifolds and (b). whenever fibers are assumed to be Einstein, all the fibers share the Einstein constant. 
For the sake of brevity, proofs of all the results in this section are only showcased when the total space is Einstein. However, the same idea also works when either the total space or its universal cover is the Riemannian product of closed-orientable Einstein manifolds. 
\subsection{When the horizontal distribution is integrable and $N^{f}$ is parallel.}\label{s41}
\hfill \break
We now establish \cref{T13}(1), which addresses the rigidity of conformal submersions where the fibers have constant scalar curvature. 
\begin{proof}[Proof of \cref{T13}(1)]
Assume without loss of generality that $(M,g)$ is Einstein with Einstein constant $\lambda$, and $s_{F}$ is the scalar curvature of the fibers. First, we show that $\lambda dim(F) = s_{F}$. The following equation can be derived by using \cref{Eq210}.
\begin{equation}\label{Eq41}
\begin{aligned}
\lambda \, dim(F)  =& s_{F} - |N^{f}|^{2} + (2dim(F)-dim(B))N^{f}(f)\\& + dim(F)(dim(B) -dim(F))|\mathcal{H}(\nabla f)|^{2} - dim(F)(\Delta f)|_{\mathcal{H}} \\& + dim(B)(dim(F) - 1)|\mathcal{V}(\nabla f)|^{2} - dim(B)(\Delta f)|_{\mathcal{V}} 
\end{aligned}
\end{equation}     
Consequently, we observe that $\lambda dim(F) \leq s_{F}$ as $Hess(f)$ is non-negative definite and $\nabla 
f$ vanishes at the point where $f$ attains its global minimum. The equation below can then be derived from \cref{Eq41} by using the fact that $N^{f}$ is parallel. 
\begin{equation}\label{Eq42}
\begin{aligned}
\lambda \, dim(F) =& s_{F}  - dim(F)div(\mathcal{H}(\nabla f))- dim(B)div(\mathcal{V}(\nabla f))\\&   + dim(B)(dim(B) - 1)|\mathcal{V}(\nabla f)|^{2}
\end{aligned}
\end{equation}
Further, using the divergence theorem in \cref{Eq42}, it can be seen that $\lambda dim(F) \geq s_{F}$. Thus, $\lambda dim(F)= s_{F}$. Therefore, $\mathcal{V}(\nabla f)$ vanishes by employing the divergence theorem in \cref{Eq42}. By substituting $\mathcal{V}(\nabla f) = 0$ in \cref{Eq42}, and applying the weak maximal principle, the function $f$ turns out to be constant. This completes the proof.
\end{proof}
\begin{remark}\label{R42}
In particular, \cref{T13}(1) implies that in a conformal submersion with integrable horizontal distribution and parallel $N^{f}$, if the total space and fibers are both Einstein, then their Einstein constants must coincide.
\end{remark}
\subsection{When $N^{f}$ vanishes identically.}\label{s42}
\hfill \break
We now proceed to the proof of \cref{T13}(2a), which concerns the rigidity of conformal submersions with fibers having constant scalar curvature. 
\begin{proof}[Proof of \cref{T13}(2a)]
By using \cref{Eq210},
\begin{equation}\label{Eq43}
\begin{aligned}
\lambda dim(F) - s_{F}=& |A^{f}|^{2} - dim(F)^{2}|\mathcal{H}\nabla f|^{2}- dim(F)\Delta f|_{\mathcal{H}} - dim(B)\Delta f|_{\mathcal{V}}\\&+ dim(F)dim(B)|\nabla f|^{2}  - dim(B)|\mathcal{V}(\nabla f)|^{2}
\end{aligned}
\end{equation}
By using the compactness of $M$, We obtain $|A^{f}|^{2} = \lambda dim(F) - s_{F}$ if $|A^{f}|^{2}$ is constant. By substituting this in \cref{Eq43}, we get
\begin{equation}\label{Eq44}
\begin{aligned}
 dim(F)div(\mathcal{H}(\nabla f)) + dim(B)div(\mathcal{V}(\nabla f))  - dim(B)(dim(B)- 1)|\mathcal{V}(\nabla f)|^{2} =& 0
\end{aligned}
\end{equation}
We conclude the result by applying the divergence theorem and the weak maximal principle in \cref{Eq43} and \cref{Eq44}. 
\end{proof}
The following is the proof of \cref{T13}(2b) which addresses the rigidity conditions when the base is scalar-flat.
\begin{proof}[Proof of \cref{T13}(2b)]
Using \cref{Eq212}, we have
\begin{equation}\label{Eq45}
\begin{aligned}   
\lambda \, dim(B) =& -2|A^{f}|^{2}-|T^{f}|^{2} - (dim(M) - 2)(\Delta f|_{\mathcal{H}} + |\mathcal{H}(\nabla f)|^{2})\\&+ (dim(M) -2)dim(B)|\nabla f)|^{2} -dim(B)\Delta f
\end{aligned}
\end{equation}
Using compactness of $(M,g)$, we can conclude that $\lambda \, dim(B) + 2|A^{f}|^{2} + |T^{f}|^{2} = 0$ if $2|A^{f}|^{2} + |T^{f}|^{2}$ is constant. Substituiting this in \cref{Eq45}, we obtain
\begin{equation}\label{Eq46}
\begin{aligned}   
 (dim(M) - 2)div(\mathcal{H}(\nabla f))- (dim(M) -2)(dim(M) - 1)|\mathcal{H}(\nabla f)|^{2} + dim(B)\Delta f =&0
\end{aligned}
\end{equation}
Consequently, $\pi_{f}$ is rigid by the divergence theorem and the weak maximal principle.
\end{proof}
Using \cref{T13}(1) and \cref{T13}(2b), we conclude the following.
\begin{theorem}\label{T45}
If $(M,g)$ is an Einstein manifold of dimension at least $4$, then up to homothety, $g$ is the unique Einstein metric in its conformal class if $(M,g)$ is the total space of a conformal submersion with integrable horizontal distribution and vanishing $T^{f}$, together with either one-dimensional fibers or a scalar-flat base.  
\end{theorem}
\begin{remark}\label{R46}
In particular, \cref{T45} implies that if $(M,g)$ is Ricci-flat, which is also a Riemannian product, then up to homothety, $g$ is the only Einstein metric in its conformal class.
\end{remark}
\subsection{When the fibers are totally geodesic.}\label{s43}
 \hfill \break
 We now prove \cref{T13}(3), which discusses the rigidity conditions of a conformal submersion with a scalar-flat base.
\begin{proof}[Proof of \cref{T13}(3)]
We use \cref{Eq212} to see that 
\begin{equation}\label{Eq47}
\begin{aligned}
\lambda \, dim(B) = & -2|A^{f}|^{2} - (dim(B)-2)\Delta f|_{\mathcal{H}} -dim(B) \Delta f \\& - (dim(B)-2)|\mathcal{H}\nabla f|^{2}+ dim(B)(dim(B)-2)|\nabla f|^{2} 
\end{aligned}
\end{equation}
Using the compactness of $M$, we obtain $\lambda dim(B) + 2|A^{f}|^{2} = 0$ if $|A^{f}|^{2}$ is constant. By substituting this in \cref{Eq47}, we get
\begin{equation}\label{Eq48}
\begin{aligned}
(dim(B)-2)div(\mathcal{H}(\nabla f)) +dim(B) \Delta f  - (dim(B)-2)(dim(B)-1)|\mathcal{H}(\nabla f)|^{2}  = & 0
\end{aligned}
\end{equation}
If $dim(B) = 2$, then by the weak maximal principle, $f$ is constant. \\
When $dim(B) > 2$, using the divergence theorem in \cref{Eq48}, it follows that $\mathcal{H}(\nabla f) = 0$. Combining this with \cref{Eq48} and applying the weak maximal principle, $f$ becomes constant.
\end{proof}
\section{Rigidity of quasi-Einstein manifolds}\label{S5}
In this section, we conclude rigidity results for certain quasi-Einstein manifolds that can be realised as the total space, the fiber or the base of suitable conformal submersions. All the quasi-Einstein manifolds mentioned here are closed and orientable. The first result is about a quasi-Einstein manifold realised as the fiber or the base of a conformal submersion.
\begin{theorem}\label{T51}
Let $(M,g,h,m,\lambda)$ be a quasi-Einstein manifold with $\lambda \geq 0$ and $m \geq 1$. Then $(M,g,h)$ is Ricci flat if $(M,g)$ is the fiber (respectively, the base) of a conformal submersion with an integrable horizontal distribution and parallel $T^{f}$ satisfying the following conditions. 
\begin{enumerate}
\item The total space of the conformal submersion is locally conformally flat, and its dimension is twice the dimension of $M$.
\item The base (respectively, the fiber) has non-negative scalar curvature.
\end{enumerate}
\end{theorem}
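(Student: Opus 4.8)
The plan is to realize the statement as an instance of Theorem~\ref{T48}, condition~(2), in the fibre case, the base case being symmetric through the ``respectively'' clauses. To avoid a notational collision I would write $\phi$ for the conformal factor of the submersion and keep $f$ for the quasi-Einstein potential on $(M,g)$. The first, routine step is the dictionary between the two sets of hypotheses. Since $(M,g)$ appears as the fibre $F$ and the total space has dimension $2\dim M$, one gets $\dim F=\dim M=\dim B$, so the submersion satisfies the standing assumption $\dim F=\dim B$ of Lemma~\ref{L46}, Proposition~\ref{P47} and Theorem~\ref{T48}. Integrability of the horizontal distribution means $A=0$ by Remark~\ref{R213}, and parallel $T$, non-negative base scalar curvature, and $\lambda\ge 0$ are assumed; these are exactly the structural hypotheses of Theorem~\ref{T48}(2).

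What is left is to feed Theorem~\ref{T48} the two sign conditions on which it (via Proposition~\ref{P47}) relies. The total space is immediate: integrability of the horizontal distribution together with parallel $T$ makes $N$ parallel, so Proposition~\ref{P44}(2) gives that the scalar curvature of the total space is non-positive, hence of a definite sign. The sign of the fibre scalar curvature is the real content. Here I would use the quasi-Einstein equation on $F=M$: tracing $Ric+\mathrm{Hess}\,f-\tfrac1m\,df\otimes df=\lambda g$ yields $\check s=(\dim M)\lambda-\Delta_F f+\tfrac1m|\nabla_F f|^2$, and integration over the closed fibre gives $\int_F\check s\,dV_F\ge 0$, with equality exactly when $\lambda=0$ and $f$ is constant. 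In parallel, substituting $A=0$ and $\nabla T=0$ into Lemma~\ref{L46} (the computation behind Equation~\ref{Eq419}) and integrating over the total space, the divergence term drops and $\hat s\ge 0$, $|T|^2\ge 0$, $\dim M\ge 3$ force $\int e^{-\frac{3n-4}{4}\phi}\check s\le 0$, where $n=2\dim M$. The aim is to play these two integral inequalities against one another to conclude $\lambda=0$ and $f$ constant.

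Granting the sign inputs, Theorem~\ref{T48}(2) then yields that the quasi-Einstein fibre is Ricci-flat; in particular the potential $f$ is forced to be constant, which is precisely the rigidity of $(M,g,f)$. For the base version one repeats the argument with the roles of $F$ and $B$ exchanged everywhere, again reading off condition~(2) of Theorem~\ref{T48}.

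I expect the reconciliation in the middle paragraph to be the main obstacle. The submersion estimate is a \emph{global} integral on the total space weighted by $e^{-\frac{3n-4}{4}\phi}$, while the quasi-Einstein estimate is \emph{fibrewise} and taken against the intrinsic fibre measure; bridging them appears to require knowing that $\phi$ is constant along the fibres, which is itself part of the conclusion of Theorem~\ref{T48}. Thus the delicate point is to organize the argument so that this fibrewise constancy of $\phi$ is extracted first, and to do so for every admissible $m\ge 1$ rather than only $m=1$, since for $m>1$ the fibre scalar curvature need not keep a constant sign pointwise and one must instead read the sign off the paired integral identities. Threading the two distinct functions $\phi$ and $f$ through one divergence identity is the crux.
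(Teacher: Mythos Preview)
Your overall strategy---reduce to Theorem~\ref{T48}(2) via the dictionary $A=0$, parallel $T$, $\dim F=\dim B$, $\hat s\ge 0$, $\lambda\ge 0$---matches the paper exactly. The gap you flag in your middle paragraph is real, and it is precisely the point where your argument and the paper's diverge.

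The hypothesis of Theorem~\ref{T48} requires the fibre scalar curvature $\check s$ to have a \emph{pointwise} sign, not merely an integrated one. Your plan is to squeeze $\check s$ between two integral inequalities, one from the traced quasi-Einstein equation and one from Equation~\eqref{Eq419}, but as you note these live against different measures and on different spaces, and reconciling them presupposes fibrewise constancy of $\phi$, which is itself part of the output of Theorem~\ref{T48}. There is no non-circular way to close this loop with only integral information, and indeed your argument never uses the hypothesis $m\ge 1$ in any essential way---a sign that something is missing.

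The paper sidesteps the whole issue by importing a pointwise bound from outside. It cites Proposition~3.6 of \cite{RQE}, which states that a compact quasi-Einstein manifold $(M,g,f,m,\lambda)$ with $m\ge 1$ and $\lambda>0$ has scalar curvature bounded below by $\frac{n(n-1)\lambda}{m+n-1}>0$; the same argument adapted to $\lambda=0$ gives $\check s\ge 0$. This is where $m\ge 1$ is actually consumed. With $\check s\ge 0$ established pointwise, the sign hypothesis of Theorem~\ref{T48} is met directly and the rest is immediate. So the fix is not to refine your integral juggling but to replace it entirely with this external pointwise estimate.
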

\begin{proof}
Proposition $3.6$ of \cite{RQE}  says that`\textit{If $(M,g,h,m,\lambda)$ is quasi-Einstein with $m \geq 1$, $\lambda > 0$ and $M$ is compact, then the scalar curvature of $(M,g)$ is bounded below by} $\frac{n(n-1)\lambda}{m + n -1}$'. It readily follows from the above result that the scalar curvature of $(M,g)$ (as in the hypothesis) is non-negative if $\lambda \geq 0$. Using \cref{T38}, we conclude the result.
\end{proof}
The next result is about a locally conformally flat quasi-Einstein manifold realised as the total space of a conformal submersion.
\begin{theorem}\label{T52}
If $(M,g,h,m,\lambda)$ is a locally conformally flat quasi-Einstein manifold of dimension at least 4 with $m \geq 1$ and $\lambda \geq 0$, then $(M,g,h)$ is a flat space form if it is the total space of a conformal submersion with an integrable horizontal distribution and parallel $N^{f}$ whose base and fiber have the same dimension.
\end{theorem}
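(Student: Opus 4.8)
The plan is to observe that the hypotheses put us exactly in the locally conformally flat case of Theorem \ref{T45}, harvest Ricci-flatness from there, and then promote ``Ricci-flat $+$ locally conformally flat'' to genuine flatness via the orthogonal decomposition of the curvature tensor.

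To begin, I would rename the conformal factor of the submersion (say $\varphi$), keeping $f$ for the quasi-Einstein potential, and note that a locally conformally flat total space of dimension $\ge 4$ with $\dim(B)=\dim(F)$ is exactly the locally conformally flat situation of Proposition \ref{P41}(1), which is the single-component case of the hypotheses of Theorem \ref{T42} and whose proof is carried out explicitly there; with only one component the standing requirement that the scalar curvatures of the components share a sign is vacuous. Given $\lambda\ge 0$, parallel $N$, and an integrable horizontal distribution, hypothesis (1) of Theorem \ref{T45} applies verbatim, and I would invoke it to conclude that $(M,g,f)$ is Ricci-flat; in particular the potential $f$ is constant and the submersion is rigid.

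It then remains to pass from Ricci-flat to flat. Since $\dim(M)\ge 4$, Remark \ref{R24} gives $W=0$, and $Ric=0$ forces $s=0$ and hence $Ric-\tfrac{s}{n}g=0$. Feeding these into the decomposition
\[
R=\frac{s}{2n(n-1)}\,g\circ g+\frac{1}{n-2}\Bigl(Ric-\frac{s}{n}g\Bigr)\circ g+W
\]
from Equation \ref{Eq21} annihilates every summand, so $R\equiv 0$; that is, $(M,g)$ has constant sectional curvature zero, which by Definition \ref{D23} is exactly a flat space form.

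Because the analytic heavy lifting --- the divergence-theorem and weak-maximum-principle arguments that pin down $\lambda=s=0$ and the constancy of the potential --- is already encapsulated in Theorem \ref{T45}, I do not foresee a genuine difficulty. The one point deserving care is the matching of hypotheses: confirming that the single-component, locally conformally flat total space really does inherit the standing assumptions of Proposition \ref{P41} and Theorem \ref{T42} (in particular the equal-dimension condition $\dim(B)=\dim(F)$), so that Theorem \ref{T45}(1) may be applied without any adjustment.
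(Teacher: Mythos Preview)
Your overall architecture --- invoke Theorem \ref{T45}(1) to get Ricci-flatness, then use $W=0$ and the decomposition \eqref{Eq21} to upgrade to flatness --- matches the paper's route. The gap is in your claim that the sign hypothesis of Theorem \ref{T45} is ``vacuous'' for a single-component total space. It is not. The condition ``the scalar curvatures of each component have the same sign'' means each component's scalar curvature has a \emph{definite} sign (does not change sign on $M$); this is what the proof of Proposition \ref{P44}(2) actually uses. There, integrating \eqref{Eq412} only yields $\int_M s\,e^{-(n-2)f/2}\,dV\le 0$, and one needs $s$ of fixed sign to conclude $s\le 0$ pointwise. Without that, Theorem \ref{T45} does not apply.

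The paper closes this gap by first invoking Proposition~3.6 of \cite{RQE}: for a compact quasi-Einstein manifold with $m\ge 1$ and $\lambda\ge 0$ one has $s\ge 0$ everywhere, which supplies the required definite sign. Notice that your argument never uses the hypothesis $m\ge 1$; that is precisely where it enters. Once $s\ge 0$ is in hand, Theorem \ref{T45}(1) applies and gives Ricci-flatness (and rigidity of the submersion), after which your curvature-decomposition step correctly yields that $(M,g)$ is a flat space form.
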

\begin{proof}
Again, by using Proposition $3.6$ of \cite{RQE}, the scalar curvature of $(M,g)$ is non-negative. Finally, \cref{T35} concludes the result.
\end{proof}
\begin{example}\label[example]{E53}
Let $(M_{1} \times M_{2},g)$ be a locally conformally flat manifold with $dim(M_{1}) = dim(M_{2}) \geq 2$, and $g$ be the product metric. Then $(M_{1} \times M_{2},g)$ is a flat space form if there exist functions $f,h \in C^{\infty}(M_{1} \times M_{2})$ such that $(M_{1} \times M_{2},e^{2f}g,h,m,\lambda)$ is quasi-Einstein with $m \geq 1$ and $\lambda \geq 0$. 
\end{example}
\noindent \cref{T14}(2) follows from \cref{T13}(1) together with \cref{Eq212}. Likewise, \cref{T14}(3) follows from \cref{T13}(2a) and \cref{Eq212}.\\

Using \cref{T13}(2b) and \cref{Eq210}, we get
\begin{theorem}\label{T56}
    A quasi-Einstein manifold is rigid if it is the fiber of a conformal submersion with vanishing $N^{f}$ and constant $|A^{f}|^{2}$ and $|T^{f}|^{2}$, where
    \begin{enumerate}
        \item The total space is a product of Einstein manifolds
        \item The base is scalar flat.
    \end{enumerate}
\end{theorem}
Finally, we obtain the following rigidity result using \cref{T13}(3) and \cref{Eq210}.
\begin{theorem}\label{T57}
    A quasi-Einstein manifold is rigid if it is the fiber of a conformal submersion with totally geodesic fibers and constant $|A^{f}|^{2}$, where
    \begin{enumerate}
        \item The total space is a product of Einstein manifolds
        \item The base is scalar flat.
    \end{enumerate}
\end{theorem}
\bibliographystyle{amsplain}
\bibliography{references}

\end{document}